\newtheorem*{corollary}{Corollary}
\newtheorem{theorem}{Theorem}
\newtheorem*{propA}{Proposition A}
\newtheorem*{propB}{Proposition B}
\DeclareMathOperator{\Real}{\mathop{Re}}
\author{V.~Iudelevich }
\title{On the mean value of the functions related to the divisor function on the ring of polynomials over a finite field.}
\date{}
\begin{document} 
	
	\maketitle
	\selectlanguage{english}
	\begin{abstract}
	Let $ \mathbb{F}_q[T]$\, be the ring of polynomials over a finite field $ \mathbb{F} _q $.\footnote{\emph{Key words and phrases.} The ring of polynomials over a finite field, divisor function.} \\Let $ g: \mathbb{F}_q[T] \rightarrow \mathbb{R} $ be a multiplicative function such that for any irreducible polynomial $ P $ over $ \mathbb{F} _q $ and any $ k \ge 1 $, the equality $ d_k = g (P ^ k) $ holds for some arbitrary sequence of reals $\{d_k\}_{k=1}^{\infty}$. In this paper, we get an explicit formula for the sum $$ T (N) = \sum\limits_{\substack{\deg F=N \\  F \text{ is monic}}}{g (F)}, $$ and also derive different asymptotics when this sum in cases of \\ $ q \to \infty; \ q \to \infty, \ N \to \infty; \ q ^ N \to \infty $.
	\end{abstract}

	\section*{Introduction}
	Let $ q$~be a prime power. In what follows, $ \mathbb{F}_q $  denote the finite field of $q$ elements. Let $ \mathbb{F}_q [T] $ be the polynomial ring over $ \mathbb {F}_q $. It is well known that $ \mathbb{F}_q [T] $ is a Euclidean ring. In particular, there is the theorem of unique factorization in this ring, i.e.,
	any polynomial $ F \neq const$ can be represented in the form 	\begin{equation}F=a \cdot P_1^{e_1}P_2^{e_2}\ldots P_k^{e_k},
	\label{Pk}
	\end{equation}
	where $ a \in \mathbb{F}_q ^ * $, $P_1, P_2, \ldots, P_k $ are distinct irreducible polynomials over $ \mathbb {F} _q $ with unitary leading coefficients (monic polynomials), and $ e_1, e_2, \ldots, e_k $ are positive integers. This decomposition is unique up to order.
	The above theorem allows us to consider analogues to some well-known multiplicative functions. Recall that for a monic polynomial $F$, the divisor function $ \tau (F) $ is defined by 
	$$
	\tau(F)\,=\,\sum\limits_{D|F}{1}
	$$
	where the summation is taken over all monic divisors of $F$.
	In other words, $ \tau (F) $ is the number of solutions of the equation
	$F_1F_2=F$ in monic polynomials. 
	The generalized divisor function $ \tau_m (F) $, $ m \geq 2 $, is defined in a similar way as the number of solutions of the equation $ F_{1} F_{2} \ldots F_{m} = F.$
	\par Given a polynomial $ F $ of degree $ n $, its norm $ N(F) $ is defined by $ N (F) = q^n $.
	Clearly, for any polynomials $ F $ and $ G $ we have
	\[
	N(FG)=N(F)N(G).
	\]
	The most important object in studying the arithmetic properties of the ring $ \mathbb {F}_q [T]$ is its zeta function $ \zeta_q (s) $. 
	For $ s = \sigma + it, \sigma> 1 $, zeta function $ \zeta_q (s) $ is defined as
	\begin{equation}
	\zeta_q(s)=\sum\limits_{\substack{F\in\mathbb{F}_q[T]\\ F\,\text{is monic} }}{\dfrac{1}{N^s(F)}}.
	\label{eulersum}
	\end{equation}
	There are $q^n$ monic polynomials of degree $n$ in $\mathbb F_q[T]$, so we get
	\begin{equation}
	\zeta_q(s)=\sum\limits_{\substack{F\in\mathbb{F}_q[T]\\ F\,\text{is monic} }}{\dfrac{1}{N^s(F)}}=\sum_{n=0}^\infty{\dfrac{q^n}{q^{ns}}}=\dfrac{1}{1-q^{1-s}}.
	\label{analit}
	\end{equation}
	It follows that $ \zeta_q(s) $ can be  continued to a meromorphic function $ {(1-q ^ {1-s})} ^ {- 1} $ on the whole
	complex plane with simple poles at the points
	$$ s_k = 1 + \frac {2 \pi k} {\ln q} i, \ \ k \in \mathbb{Z}. $$
	A simple computation shows that the residue at the point $s_k$ is equal to $(\ln q)^{-1}.$
	\par The unique factorization theorem leads to the following identity:
	\begin{equation}
	\zeta_q(s)=\prod_{P\in\mathbb{F}_q[T]}{\left(1-N^{-s}(P)\right)}^{-1},
	\label{eulerprod}
	\end{equation}
	where $\Real s > 1$ and the product is taken over all monic irreducibles $P$ (Euler product).
	\par It is interesting to study the average values of multiplicative functions over the ring $ \mathbb F_q [T] $. For the first time, such problems were   considered by L.~Carlitz. In \cite{Iud_bib_1}, he obtained precise formulas for the average values of some multiplicative functions.
	\par The possibility of obtaining explicit (not asymptotic)
	formulas in problems of such type are explained by simple nature of $\zeta_q(s)$ and the fact that the corresponding generating Dirichlet series is presented in the form $ ( \zeta_q (n_1s)) ^ {m_1} \ldots (\zeta_q (n_ks)) ^ {m_k} $. 
	Therefore, in particular, the problem of determining the value
	\begin{equation}
	\sum_{\deg F=n}{\tau_m(F)}
	\label{Iud_f_1}
	\end{equation}
	reduces to calculating the coefficient for $ q^{- ns} $ in the series
	$$
	(\zeta_q(s))^m=\dfrac{1}{(1-q^{1-s})^m}.
	$$
	The value \eqref{Iud_f_1} is analogue of the sum
	\begin{equation}
	\sum_{n\leq x}{d_m(n)},
	\label{Iud_f_2}
	\end{equation}
	where $ d_m (n) $ equals to the number of solutions of the equation $ x_1x_2 \ldots x_m = n $ in positive integers $ x_1 , x_2, \ldots, x_n.$
	The study of \eqref{Iud_f_2} is the subject of the generalized Dirichlet divisor problem.
	Along with \eqref{Iud_f_2}, the sums $\sum\limits_{n\leq x}{\dfrac{1}{d_m(n)}}$
	have been studied; we have the asymptotic formula (see  \cite{Iud_bib_2})
	$$
	\sum\limits_{n\leq x}{\dfrac{1}{d_m(n)}}=\dfrac{x}{(\ln{x})^{1-\frac{1}{m}}}\left(a_0+\dfrac{a_1}{\ln{x}}+\ldots+\dfrac{a_N}{(\ln{x})^N}+O_N\left(\dfrac{1}{(\ln{x})^{N+1}}\right)\right)
	$$
	as $ x \to + \infty, $ $ N \geq 0 $~is an arbitrary fixed number, $ a_0, a_1, \ldots, a_N, \ldots $~are some constants.
	\par Let us fix a sequence of real numbers $\left\{d_k\right\}_{k=1}^{\infty}$. We will assume that $ g: \mathbb{F} _q[T] \rightarrow \mathbb{R} $ is a multiplicative function such that for any irreducible polynomial $ P $ and integer $ k \ge 1 $ the equality 
	\begin{equation}
	d_k = g (P ^ k)
	\label {d_k}
	\end{equation}
	holds.
	Suppose that the power series
	\begin{equation}
	f (t) = f_g(t) = 1 + d_1t + d_2t ^ 2 + \ldots + d_k t ^ k + \ldots
	\label{f}
	\end{equation}
	converges in some circle centred at the origin.
	We define the sequence $ \{h_k\} _ {k = 1}^\infty $ by
		\begin{equation}
		f (t)(1-t)^{h_1}(1-t^2)^{h_2}\ldots(1-t^k)^{h_k} = 1 + h_{k + 1} t^{k +1} + \ldots.
		\label{hkk}
		\end{equation}
		It is easy to see that $ h_1 = d_1 $. Next, we define the sequence $ \{a_k\}_{k = 1}^\infty $ from the expansion
			\begin{equation}
			\ln f (t) = \sum_{k = 1}^\infty {a_k t^k}.
			\label{akk}
			\end{equation}
			In this paper, we study the asymptotic of the sum
			$$ T (N) = T_g (N) = \sum_ {\deg F = N} g (F), $$
			where $ F $ runs through monic polynomials of degree $ N $. We prove the following two theorems.
			\begin{theorem} \label{thm1}
		We have
			$$ T (N) = A_0 (N) q ^ N + A_1 (N) q ^ {N-1} + \ldots + A_ {N-1} (N) q, $$
			\begin{equation}
			A_l (N) = \sum \limits_{\substack{k_1 + 2k_2 + \ldots + (l + 1) k_{l + 1} = N \\ k_2 + 2k_3 + \ldots + lk_{l + 1} = l}} \binom{-h_1}{k_1} \binom{-h_2}{k_2}\ldots\binom {-h_{l + 1}} {k_{l + 1}}(-1)^{k_1 + k_2 + \ldots + k_{l + 1}}
			\label{Al}
			\end{equation}
			where the quantities $ h_i $ defined in \eqref{hkk}.
			 In particular, for any fixed $ N $ and $ q \to \infty $ we have
			$$ T (N) = A_0 (N) q ^ N + A_1 (N) q^{N-1} + \ldots + A_{n-1} (N) q^{N-n + 1} + O_N(q ^ {N-n}), $$
			where $ 1 \leq n \leq N. $
			\end{theorem}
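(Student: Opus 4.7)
The plan is to compute the Dirichlet generating series of $g$ and read off $T(N)$ as the coefficient of $u^N$ with $u = q^{-s}$. Since $g$ is multiplicative with $g(P^k)=d_k$ for every monic irreducible $P$, the unique factorization theorem yields the Euler product
$$\sum_{F \text{ monic}}\frac{g(F)}{N(F)^s} \;=\; \prod_{P}\Bigl(1+\sum_{k\ge 1}d_k N(P)^{-ks}\Bigr) \;=\; \prod_{P} f\bigl(N(P)^{-s}\bigr).$$
Because $N(F)=q^{\deg F}$, substituting $u=q^{-s}$ turns the left side into $\sum_{N\ge 0}T(N)u^N$.

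The recursion \eqref{hkk} is exactly the algorithm for writing $f$ as an infinite product: an immediate induction on the number of factors shows that, as formal power series,
$$f(t) \;=\; \prod_{k\ge 1}(1-t^k)^{-h_k}.$$
Plugging this into the Euler product and swapping the two formal products (legitimate because only finitely many pairs $(k,P)$ contribute to any fixed power of $u$) gives
$$\sum_{N\ge 0}T(N)u^N \;=\; \prod_{k\ge 1}\prod_{P}(1-N(P)^{-ks})^{-h_k} \;=\; \prod_{k\ge 1}\zeta_q(ks)^{h_k}.$$
By \eqref{analit} we have $\zeta_q(ks)=(1-qu^k)^{-1}$, so the generating function simplifies to $\prod_{k\ge 1}(1-qu^k)^{-h_k}$.

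It remains to extract the coefficient of $u^N$. Expanding each factor by the binomial series,
$$(1-qu^k)^{-h_k} \;=\; \sum_{j\ge 0}\binom{-h_k}{j}(-1)^j q^{j}u^{kj},$$
and multiplying out, we obtain
$$T(N) \;=\; \sum_{\sum_k kk_k=N}\Bigl(\prod_{k}\binom{-h_k}{k_k}(-1)^{k_k}\Bigr) q^{\sum_k k_k}.$$
Now reorganize by the exponent of $q$. Setting $l = N - \sum_k k_k = \sum_k (k-1)k_k = k_2+2k_3+\ldots$, the second constraint forces $k_k=0$ for $k>l+1$, so the inner sum becomes precisely $A_l(N)$ from \eqref{Al}. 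A quick check shows $l=N$ is infeasible (it would force every $k_k=0$, contradicting $\sum(k-1)k_k=N>0$), hence $l$ ranges from $0$ to $N-1$, matching the stated shape $T(N)=A_0(N)q^N+\ldots+A_{N-1}(N)q$. The asymptotic for fixed $N$ and $q\to\infty$ is then an immediate truncation, since each $A_l(N)$ depends only on $N$ and on $h_1,\ldots,h_{l+1}$ (not on $q$).

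The main obstacle is not analytic but bookkeeping: one must verify carefully that the formal identity $f(t)=\prod_k(1-t^k)^{-h_k}$ really follows from \eqref{hkk} and justify the double-product swap, and then track the two linear constraints on $(k_1,\ldots,k_{l+1})$ so that the reorganization by power of $q$ lands on exactly the form \eqref{Al}. Everything else is a routine coefficient extraction in the ring of formal power series in $u$.
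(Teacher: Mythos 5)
Your proof is correct and follows essentially the same route as the paper: Euler product, factorization of $f$ via the $h_k$ into zeta factors, and extraction of the coefficient of $u^N$ reorganized by the power of $q$ (the constraint bookkeeping $l=\sum_k(k-1)k_k$ forcing $k_k=0$ for $k>l+1$ is exactly the paper's computation of its sum $S_1$). The only cosmetic difference is that you work with the full formal infinite product $f(t)=\prod_{k\ge 1}(1-t^k)^{-h_k}$, whereas the paper truncates at level $n$ and then sets $n=N$ so that the remainder contributions vanish; the two are equivalent here.
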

			
			\begin{theorem} \label{thm2}
			Let	$ N\ge 190,\ \ 0 <d_1 <1, $ and for all $k\ge 1$ we have
			\begin{equation}
			 k | a_k | \le 1,
			\label{conda}
			\end{equation}
			where the values $ d_k $ and $ a_k $ are defined in \eqref{d_k} and \eqref{akk} respectively.
			\\ Then for $h\in \left[1,\frac{N}{36\ln N}\right]$ and $q\ge (17h)^{12h+9}$ we have
		\begin{equation}
		T(N)=A_0(N)q^N+A_1(N)q^{N-1}+\ldots+A_{h-1}(N)q^{N-h+1}+O\left(d_1 p(h)\dfrac{q^{N-h}}{N^{1-d_1}}\right),
		\label{TN}
		\end{equation}
			where the quantities $A_l (N), \ 0 \le l \le h-1 $ are defined in \eqref{Al},
			$$ | A_l (N) | \le \dfrac{3d_1p (l)}{N ^ {1-d_1}}, $$
			$ p(l) $ is number of partitions of $ l $, and implied constant in \eqref{TN} is (at most) 3.1. \\In particular, the equality \eqref{TN} holds for $q\to\infty,\,N\to\infty$ and fixed $h\ge 1$, also equality \eqref{TN} holds for $q\to\infty$, fixed $N\ge 190$ and $h\in \left[1,\frac{N}{36\ln N}\right]$. 
			\end{theorem}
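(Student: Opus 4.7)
My plan is to deduce Theorem~\ref{thm2} from the exact formula in Theorem~\ref{thm1}. Write
$$
T(N) = \sum_{l=0}^{h-1} A_l(N)\, q^{N-l} \;+\; R(N,h), \qquad R(N,h) = \sum_{l=h}^{N-1} A_l(N)\, q^{N-l},
$$
so everything reduces to (i) establishing the pointwise bound $|A_l(N)| \le 3 d_1 p(l)/N^{1-d_1}$ stated in the theorem, and (ii) using it to control the tail $R(N,h)$ under the hypothesis $q \ge (17h)^{12h+9}$.

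For step (i), I would analyze the formula \eqref{Al}. The second constraint $k_2 + 2k_3 + \ldots + l k_{l+1} = l$ forces $(k_2,k_3,\ldots,k_{l+1})$ to be a partition of $l$, and the first constraint then fixes $k_1 = N - l - (k_2 + \ldots + k_{l+1})$, which is close to $N$. Rewriting $\binom{-h_1}{k_1}(-1)^{k_1} = \binom{d_1+k_1-1}{k_1} = \Gamma(d_1+k_1)/\bigl(\Gamma(d_1)\Gamma(k_1+1)\bigr)$ and applying explicit Stirling-type inequalities yields a bound of the form
$$
\bigl|\tbinom{-h_1}{k_1}\bigr| \;\le\; \frac{C\, d_1}{N^{1-d_1}},
$$
since $0 < d_1 < 1$ makes $\Gamma(d_1+1)$ close to $1$ and $k_1/N \to 1$ under the constraint $h \le N/(36\ln N)$. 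The remaining factors $\binom{-h_j}{k_j}$ for $j \ge 2$ need to be controlled uniformly. For this I would use the hypothesis $k|a_k|\le 1$: taking logarithms in $f(t)=\prod_k(1-t^k)^{-h_k}$ yields $n a_n = \sum_{d \mid n} d\, h_d$, so by M\"obius inversion $n h_n = \sum_{d \mid n} \mu(n/d)\, d\, a_d$, giving $|h_n|\le \tau(n)/n$. This smallness of $h_j$ for $j \ge 2$ lets me estimate $\prod_{j\ge 2}\bigl|\binom{-h_j}{k_j}\bigr|$ by a quantity independent of $N$ for each fixed partition. Summing over the $p(l)$ admissible partitions of $l$ then produces the desired bound, and simultaneously the absolute bound $|A_l(N)| \le 3 d_1 p(l)/N^{1-d_1}$.

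For step (ii), combining the estimate from (i) with the trivial splitting gives
$$
|R(N,h)| \;\le\; \frac{3 d_1\, q^{N-h}}{N^{1-d_1}} \sum_{l \ge h} p(l)\, q^{h-l}.
$$
Here I would use the Hardy--Ramanujan bound $p(l) \le e^{\pi\sqrt{2l/3}}$ together with the hypothesis $q \ge (17h)^{12h+9}$ to show that the geometric-type residual sum is small (the size is chosen so that the total implied constant is at most $3.1/3 = 1.033\ldots$ times the constant in step (i), yielding the advertised $3.1$). The ancillary condition $h \le N/(36\ln N)$ is what guarantees that the estimates of (i) apply uniformly to all $l \le h$; it is also what keeps $k_1 \ge N - O(h\ln h)$ large enough for the Stirling approximation of $\binom{d_1+k_1-1}{k_1}$ to be valid with an absolute constant.

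The main obstacle is step (i): extracting the clean dependence $d_1/N^{1-d_1}$ from $\binom{d_1+k_1-1}{k_1}$ with an \emph{explicit} (not asymptotic) constant, and simultaneously proving that the product of the $j\ge 2$ binomial factors, summed over all partitions of $l$, is bounded by a universal multiple of $p(l)$. This requires simultaneously balancing the smallness of $|h_j| \le \tau(j)/j$ against the possible growth of $\binom{-h_j}{k_j}$ when $k_j$ is large, and is where the quantitative hypothesis $k|a_k|\le 1$ is essential.
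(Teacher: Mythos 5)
Your step (i), for $0\le l\le h-1$, is essentially the paper's argument: the paper also derives $|h_k|\le\tau(k)/k\le1$ from $k|a_k|\le1$ by M\"obius inversion, bounds every factor $\bigl|\binom{-h_j}{k_j}\bigr|$, $j\ge2$, trivially by $1$, counts the $p(l)$ partitions, and extracts $d_1N^{d_1-1}$ from the first factor via the elementary identity $(-1)^{m}\binom{-d_1}{m}=\frac{d_1}{m}\prod_{r=1}^{m-1}\bigl(1+\frac{d_1}{r}\bigr)\le d_1e^{d_1}m^{d_1-1}$ (no Stirling needed), using $k_1=N-\theta l$ with $\theta\in[1,2]$ and $l<6h\le N/(6\ln N)$ to keep $k_1$ comparable to $N$.

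The genuine gap is in step (ii). You apply the bound $|A_l(N)|\le 3d_1p(l)/N^{1-d_1}$ to the entire tail $h\le l\le N-1$ of the exact formula from Theorem~\ref{thm1}, but that bound is only proved (and is only true) when $k_1=N-\theta l$ stays comparable to $N$, i.e.\ for $l\ll N/\ln N$ (your own phrase ``$k_1/N\to1$'' concedes this). Once $l$ is of order $N$ the factor $\binom{-h_1}{k_1}$ no longer supplies the saving $d_1N^{d_1-1}$; worse, for $l\ge N/2$ the sum \eqref{Al} contains terms with $k_1=0$ (e.g.\ $k_2=N/2$, all other $k_j=0$), which carry \emph{no} factor of $d_1$ at all, so in that range one only has $|A_l(N)|\le p(l)$ and your displayed inequality for $|R(N,h)|$ is unjustified. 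A correct completion of your route would need a separate estimate for $\sum_{l\gg N/\ln N}A_l(N)q^{N-l}$ showing it is still $\le\varepsilon\, d_1p(h)q^{N-h}/N^{1-d_1}$, which is delicate because $d_1\in(0,1)$ is arbitrary. This is precisely why the paper does \emph{not} truncate the $n=N$ exact formula: it reruns the generating-function factorization with the cutoff $n=6h$, so that only $A_l(N)$ with $l<6h$ ever appear, and the large-$l$ contributions are repackaged into two sums $S_2$ (terms with $k_2+2k_3+\dots+(n-1)k_n\ge n$, estimated by counting lattice points and splitting at $l=N/3$) and $S_3$ (terms involving the Taylor coefficients $B_k$ of the residual entire product $\prod_l f_{n+1}(z^l/q^l)^{\pi_q(l)}$, controlled by the explicit bound $|B_k|\le e^{kb(n)}q^{-nk/(n+1)}$); these require the hypotheses $q\ge(en)^{2n+9}$ and $n\le N/(6\ln N)$ in an essential way. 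Your proposal as written does not contain a substitute for this part of the argument.
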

			\par Next, we use the main technical result from the paper \cite{Iud_bib_3} to find the asymptotics $ T (N) $ in the case $ q^N\to\infty $.
			\par We conclude our work by giving some examples of the functions $ g $ and the expansions for the sums $ T_g(N) $.
		\section*{Proof of Theorem \ref{thm1}}
	\par Set $$\Phi(s)=\sum\limits_{\substack{F\in\mathbb{F}_q[T]\\ F \text{ is monic}}}{\dfrac{g(F)}{{N(F)}^s}}=\sum_{N=0}^{+\infty}{T(N)q^{-Ns}}.$$ Since $g$ is multiplicative, we have
	\begin{multline}\Phi(s)=\prod_{\substack{P\  \text{is monic}\\ \text{irreducible}}}{\left(1+\dfrac{g(P)}{N(P)^s}+\dfrac{g(P^2)}{N(P^2)^{s}}+\ldots \right)}=\\ \prod_{l=1}^{+\infty}{\left(1+\dfrac{d_1}{q^{ls}}+\dfrac{d_2}{q^{2ls}}+\ldots\right)^{\pi_q(l)}}=
	\prod_{l=1}^{+\infty}{(f(q^{-ls}))^{\pi_q(l)}},
	\end{multline}
	where $\pi_q(l)$ is the number of irreducible monic polynomial of degree $l$.
	Now we set \begin{equation}
	f_{k+1}(t)=f(t)(1-t)^{h_1}(1-t^2)^{h_2}\ldots(1-t^k)^{h_k}.
	\label{hk}
	\end{equation}
	Then for $k=n$ and $t=q^{-ls}$ we have
	$$f(q^{-ls})=f_{n+1}(q^{-ls})(1-q^{-ls})^{-h_1}\ldots(1-q^{-nls})^{-h_n}.$$
	Thus we get
	$$\Phi(s)=(\zeta_q(s))^{h_1}(\zeta_q(2s))^{h_2}\ldots(\zeta_q(ns))^{h_n}\prod_{l=1}^{+\infty}{(f_{n+1}(q^{-ls}))^{\pi_q(l)}}.$$
	Now we set $z=q^{1-s}$, $T_0(N)=q^{-N}T(N)$. Setting via
	$$\xi(z)=\sum_{N=0}^{+\infty}{T_0(N)z^N},$$
	we have
	\begin{equation} \xi(z)=\left(1-z\right)^{-h_1}\left(1-\dfrac{z^2}{q}\right)^{-h_2}\ldots\left(1-\dfrac{z^n}{q^{n-1}}\right)^{-h_n}\prod_{l=1}^{+\infty}{\left(f_{n+1}\left(\dfrac{z^l}{q^l}\right)\right)^{\pi_q(l)}}.
	\label{xi}
	\end{equation}
	Set
	$$\Pi(z)=\prod_{l=1}^{+\infty}{\left(f_{n+1}\left(\dfrac{z^l}{q^l}\right)\right)^{\pi_q(l)}}=\exp\Big\{\sum_{l=1}^{+\infty}\pi_q(l)\ln f_{n+1}\left(\dfrac{z^l}{q^l}\right)\Big \}.$$
	We define the sequnce $c_\nu$ from the expansion
	\begin{equation}
	\ln f_{n+1}(t)=\sum_{\nu=n+1}^{+\infty}c_{\nu}t^\nu.
	\label{cnu}
	\end{equation}
	Using the following equality
	$$\ln f_{n+1}(t)=\ln f(t)+h_1\ln (1-t)+\ldots+h_n\ln(1-t^n)$$
	and comparing coefficients at $t^\nu,\ \nu\ge n+1$ we have
	$$c_\nu=a_\nu-\sum\limits_{\substack{d|\nu \\
			d\le n}}{\dfrac{dh_d}{\nu}}.$$
	Hence 
	\begin{multline}
	\ln\Pi(z)=\sum_{l=1}^{+\infty}\pi_q(l)\ln f_{n+1}\left(\dfrac{z^l}{q^l}\right)=\sum_{l=1}^{+\infty}{\dfrac{1}{l}\sum_{d|l}{\mu(d)q^{\frac{l}{d}}\sum_{\nu\ge n+1}{c_\nu\dfrac{z^{\nu l}}{q^{\nu l}}}}}=\\
	\sum_{d=1}^{+\infty}{\dfrac{\mu(d)}{d}\sum_{\delta=1}^{+\infty}{\dfrac{q^\delta}{\delta}}\sum_{\nu\ge n+1}c_\nu\dfrac{z^{\nu d\delta}}{q^{\nu d \delta}}}=\sum_{k\ge n+1}{\dfrac{1}{q^k}\Big\{ \sum\limits_{\substack{\nu d \delta =k 
				\\\nu\ge n+1}}}{\dfrac{\mu(d)q^\delta c_\nu}{d\delta}}
	\Big\}z^k=\sum_{k\ge n+1}{A_k z^k},
	\end{multline}
	where
	\begin{equation}
	A_k={\dfrac{1}{q^k}\Big\{ \sum\limits_{\substack{\nu d \delta =k 
				\\\nu\ge n+1}}}{\dfrac{\mu(d)q^\delta c_\nu}{d\delta}}
	\Big\}.
	\label{Ak}
	\end{equation}
	Thus we have  
	$$\prod_{l=1}^{+\infty}{\left(f_{n+1}\left(\dfrac{z^l}{q^l}\right)\right)^{\pi_q(l)}}=\exp\Big\{\sum_{k\ge n+1}{A_k z^k}\Big \}=\sum_{k=0}^{+\infty}{B_k}z^k$$
	for some sequence $B_k$ suth that $B_0=1$  and $B_1=B_2=\ldots=B_n=0$.
	Hence
	\begin{multline} \xi(z)=\left(1-z\right)^{-h_1}\left(1-\dfrac{z^2}{q}\right)^{-h_2}\ldots\left(1-\dfrac{z^n}{q^{n-1}}\right)^{-h_n}\sum_{k=0}^{+\infty}{B_k z^k}=\\
	\left( \sum\limits_{k_1 = 0}^{+\infty}  (-1)^{k_1} \binom{-h_1}{k_1} z^{k_1} \right)  \left( \sum\limits_{k_2 = 0}^{+\infty}  (-1)^{k_2} \binom{-h_2}{k_2} \dfrac{z^{2 k_2}}{q^{k_2}} \right) \ldots\\ \ldots\left( \sum\limits_{k_n = 0}^{+\infty}(-1)^{k_n} \binom{-h_n}{k_n} \dfrac{z^{nk_n}}{q^{(n-1)k_n}} \right)  \sum\limits_{k=0}^{+\infty} B_k z^k.
	\end{multline}
	By comparing the coefficient of $z^N$ we get
	$$T_0(N)=\sum\limits_{\substack{ k+k_1+2k_2+\ldots+nk_n=N\\ k,\,k_i\ge 0\\}} {\binom{-h_1}{k_1}\ldots \binom{-h_n}{k_n}\dfrac{(-1)^{k_1 + \ldots + k_n}B_k}{q^{k_2 + 2k_3 + \ldots + (n-1)k_n}}}.$$
	We divide this sum into three parts
	\begin{equation}
	T_0(N)=S_1+S_2+S_3.
	\label{Si}
	\end{equation}
	Here $ S_1 $ involves the terms  with $ k = 0 $ and
	$$ k_2 + 2k_3 + \ldots + (n-1) k_n \leq n-1; $$
	$ S_2 $ contains the terms from $S$ with $ k = 0 $ and
	$$ k_2 + 2k_3 + \ldots + (n-1) k_n \geq n. $$
	Finally, $S_3$ includes those terms with $k\geq 1$.
	Since $ B_k = 0 $ for $ k = 1, 2, \ldots, n $,  the components of $ S $ with $ k \geq n + 1 $ are included in $ S_3 $.
Note that
\begin{multline}S_1=\sum\limits_{\substack{k_1+2k_2+\ldots+nk_n=N\\ k_2+2k_3+\ldots+(n-1)k_n\le n-1}}{\binom{-h_1}{k_1}\ldots \binom{-h_n}{k_n}\dfrac{(-1)^{k_1 + \ldots + k_n}}{q^{k_2 + 2k_3 + \ldots + (n-1)k_n}}}=\\
\sum_{l=0}^{n-1}{{q^{-l}}\sum\limits_{\substack{ k_1+2k_2+\ldots+(l+1)k_{l+1}=N\\ k_2+2k_3+\ldots+lk_{l+1}=l}}}\binom{-h_1}{k_1}\ldots \binom{-h_{l+1}}{k_{l+1}} (-1)^{k_1 + k_2 + \ldots + k_{l+1}}=\sum_{l=0}^{n-1}A_l(N)q^{-l}.
\end{multline}
	If we choose  $n=N$, then the sums $S_2$ and $S_3$ will be empty. Then we have
	$$T_0(N)=S_1=\sum_{l=0}^{N-1}A_l(N)q^{-l},$$
The proof is completed by multiplying both sides  by $ q ^ N $. 
\section*{Proof of Theorem 2}
	Taking the logarithm from both sides of the equation \eqref{hk}, expanding both sides in power series of $t$, and by comparing the coefficient of $t^k$, we get
	$$a_k=\sum_{d|k}{\dfrac{dh_d}{k}}.$$
    M\"obius inversion formula implies that
	$$h_k=\sum_{d|k}{\dfrac{\mu(d)a_{k/d}}{d}}.$$
In view of \eqref{conda}, for all $k\ge 1$ we have
	\[
	|h_k|=\Big|\sum_{d|k}{\dfrac{\mu(d)a_{k/d}}{d}}\Big|\le\dfrac{\tau(k)}{k}\le 1.
	\]
	Further, by \eqref{conda} and the above estimate we have
	\[
	|c_\nu|=\Big |a_\nu-\sum\limits_{\substack{d|\nu \\d\le n}}{\dfrac{dh_d}{\nu}}\Big |\le\dfrac{1}{\nu}\left({1+\sum_{d\le n}\tau(d)}\right)\le\dfrac{n\ln n+n+1}{\nu}=\dfrac{b_1(n)}{\nu}.
	\]
	Next,
	$$|A_k|=\dfrac{1}{q^k}\Big|\sum\limits_{\substack{d\delta\nu=k \\ \nu\ge n+1}}{\dfrac{q^\delta\mu(d)c_\nu}{d\delta}}\Big|\le\dfrac{b_1(n)}{kq^k}\sum\limits_{\substack{\nu|k \\ \nu\ge n+1}}\sum_{d|\frac{k}{\nu}}q^{\frac{k}{\nu d}}\le\dfrac{4b_1(n)}{kq^{\frac{n}{n+1}}}.$$
	Now we estimate the values of $B_k$ defined by expansion
	$$\exp\Big\{\sum_{k\ge n+1}{A_k z^k}\Big \}=\sum_{k=0}^{+\infty}{B_k}z^k.$$
	We have
	$$B_k=\sum_{r\ge 1}\dfrac{1}{r!}\sum\limits_{\substack{i_1,\ldots,i_r\ge n+1 \\ i_1+\ldots+i_r=k}}A_{i_1}\ldots A_{i_r}. $$
	Let $$I(k,r)=\#\{(i_1,\ldots,i_r)\in\mathbb{Z}^r: i_m\ge n+1,\ i_1+\ldots+i_r=k \},$$
	then $$I(k,r)=\binom{k-nr-1}{r-1}<\dfrac{k^r}{(r-1)!},\ \ 1\le r \le \dfrac{k}{n+1}.$$
	Further, under the condition $i_m\ge n+1$ and $i_1+\ldots+i_r=k$ the following inequality holds
	$$|A_{i_1}A_{i_2}\ldots A_{i_r}|\le\left(\dfrac{4b_1(n)}{n+1}\right)^r\dfrac{1}{q^{\frac{n}{n+1}k}}=\dfrac{(b(n))^r}{q^{\frac{n}{n+1}k}},$$
	where
	$$b(n)=\dfrac{4(n\ln n+n+1)}{n+1}<4(\ln n+1).$$
	Hence
	\begin{equation}
	|B_k|\le\sum_{1\le r \le\frac{k}{n+1}}\dfrac{I(k,r)}{r!}\max\limits_{\substack{i_m\ge n+1 \\ i_1+\ldots+i_r=k}}|A_{i_1}A_{i_2}\ldots A_{i_r}|\le\sum_{r=1}^{+\infty}\dfrac{(kb(n))^r}{r!(r-1)!}\dfrac{1}{q^{\frac{n}{n+1}k}}<\dfrac{e^{kb(n)}}{q^{\frac{n}{n+1}k}}.
	\end{equation}
	Finally, we get
	\begin{equation}
	|B_k|\le\dfrac{e^{kb(n)}}{q^{\frac{n}{n+1}k}}.
	\label{Bk}
	\end{equation}
	We recall that
	$$T_0(N)=\sum\limits_{\substack{ k+k_1+2k_2+\ldots+nk_n=N\\ k,\,k_i\ge 0\\}} {\binom{-h_1}{k_1}\ldots \binom{-h_n}{k_n}\dfrac{(-1)^{k_1 + \ldots + k_n}B_k}{q^{k_2 + 2k_3 + \ldots + (n-1)k_n}}}$$
	and
	$$T_0(N)=S_1+S_2+S_3,$$
	the values $S_i$ was defined on page \pageref{Si}. It turns out that the first term will make up the leading term in the asymptotics, and the sums $ S_2 $ and $ S_3 $ will give the remainder term.
	\par Take $q\ge (en)^{2n+9},\ 6\le n\le\frac{N}{6\ln N},\,n=6h, h\ge 1$.
	\par We consider the sum $S_1$ firstly. We have
	$$S_1=\sum_{l=0}^{n-1}A_l(N)q^{-l},$$
	where
	$$A_l(N)=\sum\limits_{\substack{ k_1+2k_2+\ldots+(l+1)k_{l+1}=N\\ k_2+2k_3+\ldots+lk_{l+1}=l}}\binom{-h_1}{k_1}  \binom{-h_2}{k_2} \ldots \binom{-h_{l+1}}{k_{l+1}} (-1)^{k_1 + k_2 + \ldots + k_{l+1}}.$$
	For all $m\ge 1$
	$$0<(-1)^m\binom{-h_1}{m}=(-1)^m\binom{-d_1}{m}=\dfrac{d_1}{m}\prod_{r=1}^{m-1}\left(1+\dfrac{d_1}{r}\right)\le\dfrac{d_1}{m}\exp\Big\{d_1\sum_{r=1}^{m-1}\dfrac{1}{r} \Big \}\le\dfrac{d_1e^{d_1}}{m^{1-d_1}}.$$
Since $d_1\in (0,1)$, the following estimate holds
	\begin{equation}
	(-1)^m\binom{-h_1}{m}\le d_1.
	\label{d1}
	\end{equation}
Further, by the inequality $|h_i| \le 1$, $i = 1, 2, \ldots$, we have
	$$ \left| \binom{-h_i}{k_i} \right|\le 1.$$
	Let us estimate the values of $A_l(N)$. For $l\ge 1$ define the value $\theta$ from the equality
	$$2k_2+3k_3+\ldots+(l+1)k_{l+1}=\theta l.$$
	Then $1 \le \theta\le 2$, hence
	$$k_1=N-2k_2-\ldots-(l+1)k_{l+1}=N-\theta l> N-2n.$$  Since
	 $n\le\frac{N}{6\ln N}$ and $N\ge 190$, then
	\begin{multline*}
	0<(-1)^{k_1}\binom{-h_1}{k_1}\le\dfrac{d_1e^{d_1}}{(N-2n)^{1-d_1}}=\dfrac{d_1e^{d_1}}{N^{1-d_1}}\dfrac{1}{(1-\frac{2n}{N})^{1-d_1}}<\\
	\dfrac{d_1e^{d_1}}{N^{1-d_1}}\dfrac{1}{(1-\frac{1}{3\ln N})^{1-d_1}}\le\dfrac{d_1e^{d_1}}{N^{1-d_1}}\dfrac{1}{(1-\frac{1}{3\ln 190})^{1-d_1}}<\dfrac{3d_1}{N^{1-d_1}}.
	\end{multline*}
	Since the sum of $A_l(N)$ contains exactly $p(l)$ terms (where $p(l)$ denote the number of partitions of $l$), we get
	$$|A_l(N)|\le\dfrac{3d_1p(l)}{N^{1-d_1}}.$$
	This estimate also holds for $l=0$ since
	$$A_0(N)=(-1)^N\binom{-h_1}{N}\le\dfrac{d_1 e^{d_1}}{N^{1-d_1}}<\dfrac{3 d_1 p(0)}{N^{1-d_1}}.$$
	Now we consider $S_2$. We have
	\begin{multline}
	S_2=\sum\limits_{\substack{ k_1+2k_2+\ldots+nk_{n}=N\\ k_2+2k_3+\ldots+(n-1)k_{n}\ge n}}{\binom{-h_1}{k_1}\ldots \binom{-h_n}{k_n}\dfrac{(-1)^{k_1 + \ldots + k_n}}{q^{k_2 + 2k_3 + \ldots + (n-1)k_n}}}=\\
	=\sum_{n\le l\le N}q^{-l}\sum\limits_{\substack{ k_1+2k_2+\ldots+nk_{n}=N\\ k_2+2k_3+\ldots+(n-1)k_{n}=l}}\binom{-h_1}{k_1}\ldots \binom{-h_n}{k_n}(-1)^{k_1 + \ldots + k_n}=\\
	=\left(\sum_{n\le l\le \frac{N}{3}}+\sum_{\frac{N}{3}< l\le N}\right)q^{-l}\sum\limits_{\substack{ k_1+2k_2+\ldots+nk_{n}=N\\ k_2+2k_3+\ldots+(n-1)k_{n}=l}}\binom{-h_1}{k_1}\ldots \binom{-h_n}{k_n}(-1)^{k_1 + \ldots + k_n}=W_1+W_2.
	\end{multline}
	In the sum $W_1$ we define the value $\theta_1$ from the equality
	$$2k_2+\ldots+nk_n=\theta_1 l.$$
	Then $1\le\theta_1\le 2$ and $k_1=N-\theta_1 l\ge N/3$, from here we get
	$$0<(-1)^{k_1}\binom{-h_1}{k_1}\le\dfrac{d_1e^{d_1}3^{1-d_1}}{N^{1-d_1}}\le\dfrac{3d_1}{N^{1-d_1}}.$$
	The remaining factors $| \binom{-h_i}{k_i}|$ in the sum of $W_1$ will be evaluated by one.
	Then since the number of solutions of the equation $k_2+2k_3+\ldots+(n-1)k_n=l$ with unknown $k_i$ does not exceed $ \dfrac{l^{n-1}}{(n-1)!}$, we have
	\[
	|W_1|\le\dfrac{3d_1}{N^{1-d_1}}\sum_{n\le l \le \frac{N}{3}}\dfrac{l^{n-1}}{(n-1)!q^l}=\dfrac{3d_1}{N^{1-d_1}(n-1)!q^n}\sum_{l=0}^{\frac{N}{3}-n}\dfrac{(l+n)^{n-1}}{q^l}.
	\]
	Since the inequality $(l+n)^{n-1}\le q^{l/2}$ holds for $q\ge(n+1)^{2n-2}$, then
	\begin{multline}
	|W_1|\le\dfrac{3d_1}{N^{1-d_1}(n-1)!q^n}\left(n^{n-1}+\sum_{l=1}^{+\infty}q^{-l/2}\right)=\\ \dfrac{3d_1}{N^{1-d_1}(n-1)!q^n}\left(n^{n-1}+\dfrac{1}{\sqrt q-1}\right)<\dfrac{3.1d_1n^n}{N^{1-d_1}q^n n!}.
	\end{multline}
	In the sum $W_2$ the facrors $|\binom{-h_i}{k_i}|, i\neq 1$ we estimate by one, and for the value $(-1)^{k_1}\binom{h_1}{k_1}$ we use the estimate \eqref{d1}. Then
	$$|W_2|\le \dfrac{d_1}{q^{N/3}}\sum_{N/3<n\le N}\dfrac{l^{n-1}}{(n-1)!}\le\dfrac{d_1N^n}{(n-1)!q^{N/3}}.$$
	Let us prove auxiliary inequality
	$$\dfrac{3.1d_1 n^n}{n!N^{1-d_1}q^n}>\dfrac{3\cdot6^5 d_1 N^n}{(n-1)!q^{N/3}}.$$
	For this purpose, it is sufficient to show that 
	$n<\dfrac{\frac{N \ln q}{3 \ln N}-1}{1+\frac{ \ln q}{ \ln N}}$. For $N\ge 190$ we have
	\[\dfrac{\frac{N \ln q}{3 \ln N}-1}{1+\frac{ \ln q}{ \ln N}}\ge\dfrac{\frac{N \ln 2}{3 \ln N}-1}{1+\frac{ \ln 2}{ \ln N}}\ge\left(1+\frac{\ln 2}{\ln 190}\right)^{-1}\dfrac{\ln 2}{3}\dfrac{N}{\ln N}-\left(1+\frac{\ln 2}{\ln 190}\right)^{-1}>\dfrac{N}{6\ln N}\ge n.
	\]
	Hence
	\[
	|S_2|\le|W_1|+|W_2|\le\dfrac{3.1d_1(1+\frac{1}{3\cdot 6^5})n^n}{n!N^{1-d_1}q^n}<\dfrac{3.2d_1n^n}{n!N^{1-d_1}q^n}.
	\]
	Finally, consider the sum $S_3$. We have
	\begin{multline}
	|S_3|=\left|\sum\limits_{\substack{ k+k_1+2k_2+\ldots+nk_n=N\\ k\ge n+1}}{\binom{-h_1}{k_1}\ldots \binom{-h_n}{k_n}\dfrac{(-1)^{k_1 + \ldots + k_n}B_k}{q^{k_2 + 2k_3 + \ldots + (n-1)k_n}}}\right|\le \\
	\sum_{k_1=0}^{N-n-1}(-1)^{k_1}\binom{-h_1}{k_1}\sum\limits_{\substack{k+2k_2+\ldots+nk_n=N-k_1 \\ k\ge n+1}}\dfrac{|B_k|}{q^{k_2+2k_3+\ldots+(n-1)k_n}}.
	\end{multline}
	Note that
	$$k_2+2k_3+\ldots+(n-1)k_n\ge\dfrac{N-k-k_1}{2}.$$
	Hence by virtue of \eqref{Bk} we get
	$$|S_3|\le\sum_{k_1=0}^{N-7}(-1)^{k_1}\binom{-h_1}{k_1}\sum\limits_{\substack{ k+2k_2+\ldots+nk_n=N-k_1\\ k\ge n+1}}\dfrac{1}{q^{\frac{N-k_1}{2}+k\left(\frac{1}{2}-\frac{1}{n+1}-\frac{b(n)}{\ln q}\right)}}.$$
	Since
\[\dfrac{1}{2}-\dfrac{1}{n+1}-\dfrac{b(n)}{\ln q}>\dfrac{1}{2}-\dfrac{1}{n+1}-\dfrac{4(\ln n+1)}{(2n+9)\ln(en)}
>\dfrac{1}{2}-\dfrac{1}{7}-\dfrac{4}{21}=\dfrac{1}{6}, \]
and $n=6h$ we will have
	$$k\left(\dfrac{1}{2}-\dfrac{1}{n+1}-\dfrac{b(n)}{\ln q}\right)>\dfrac{n+1}{6}>h.$$
Since the number of solutions of the equation $k+2k_2+\ldots+nk_n=N-k_1$ with unknown $k, k_2, \ldots, k_n$ does not exceed $\frac{(N-k_1)^n}{n!}$, then
	$$|S_3|\le\left(\sum_{0\le k_1\le N/2}+\sum_{N/2< k_1\le N-7}\right)(-1)^{k_1}\binom{-h_1}{k_1}\dfrac{(N-k_1)^n}{n!q^{\frac{N-k_1}{2}+h}}=W_3+W_4.$$
	Using the estimate \eqref{d1} we get
	$$|W_3|\le\dfrac{d_1N^{n+1}}{n!q^{\frac{N}{4}+h}}.$$
	In the sum $W_4$
	$$0<(-1)^{k_1}\binom{h_1}{k_1}\le\dfrac{d_1e^{d_1}2^{1-d_1}}{N^{1-d_1}}\le\dfrac{d_1e}{N^{1-d_1}}.$$
Hence
	$$|W_4|\le\dfrac{d_1e}{N^{1-d_1}n!q^h}\sum_{N/2<k_1\le N-7}\dfrac{(N-k_1)^n}{q^{\frac{N-k_1}{2}}}=\dfrac{d_1 e}{N^{1-d_1}n!q^h}\sum_{7\le l < N/2}\dfrac{l^n}{q^{l/2}}.$$ 
	Since   $q\ge(en)^{2n+9}>e^{\frac{4n\ln 7}{7}}$   for $n\ge 6$, then  we have $l^n\le q^{l/4}$ for $l\ge 7$. Hence
	$$|W_4|\le\dfrac{d_1 e}{N^{1-d_1}n!q^h}\sum_{l=7}^{+\infty}\dfrac{1}{q^{l/4}}=\dfrac{d_1 e}{N^{1-d_1}n!q^h(q^{7/4}-q^{3/2})}.$$
	If $q\ge (en)^{2n+9}$ and $N\ge 190$ then we have
	$$\dfrac{N^{n+1}}{n!q^{N/4+h}}<\dfrac{e}{N^{1-d_1}n!q^h(q^{7/4}-q^{3/2})},$$
then
	$$|S_3|\le|W_3|+|W_4|\le\dfrac{2ed_1}{N^{1-d_1}n!q^h(q^{7/4}-q^{3/2})}.$$
	Hence taking into proven estimates
	$$|S_2|+|S_3|\le\dfrac{3.2d_1n^n}{n!N^{1-d_1}q^n}+\dfrac{2ed_1}{n!N^{1-d_1}q^h(q^{7/4}-q^{3/2})}<\dfrac{4ed_1}{n!N^{1-d_1}(q^{7/4}-q^{3/2})q^h}.$$
	So, for $q\ge (en)^{2n+9},\ 6\le n\le\frac{N}{6\ln N},$ and $n=6h,\,h\ge 1$ we have
	$$T_0(N)=A_0(N)+A_1(N)q^{-1}+\ldots+A_{n-1}(N)q^{-n+1}+\Theta_1(h)\dfrac{1}{N^{1-d_1}q^h},$$
	where $$\left|\Theta_1(h)\right|\le\dfrac{4ed_1}{(6h)!(q^{7/4}-q^{3/2})}.$$
	Now
	\begin{multline}
	\left|A_h(N)q^{-h}+A_{h+1}(N)q^{-h-1}+\ldots+A_{6h-1}(N)q^{-6h+1}+\dfrac{\Theta_1(h)}{N^{1-d_1}q^h}\right|\le\\
	\dfrac{d_1q^{-h}}{N^{1-d_1}}\left(3p(h)+{3p(n)}\sum_{l=1}^{+\infty}\dfrac{1}{q^l}+\dfrac{12}{6!(q^{7/4}-q^{3/2})}\right)\le\dfrac{3.1d_1p(h)}{q^h N^{1-d_1}}
	\end{multline}
	for $q\ge (en)^{2n+9}$.
	Finally, we proved that
	$$T_0(N)=A_0(N)+A_1(N)q^{-1}+\ldots+A_{h-1}(N)q^{-h+1}+\dfrac{\Theta(h)}{N^{1-d_1}q^h},\ \left|\Theta(h)\right|\le 3.1d_1p(h)$$
	 for $q\ge (6eh)^{12h+9}$ and $1\le h\le \frac{N}{36\ln N}$.
	We complete our proof by multiplying both sides by $ q ^ N $.
	\section*{Corollary of Gorodetsky's Theorem}
	\par In the paper \cite{Iud_bib_3} Gorodetsky proves the following theorem
	\begin{theorem}\label{thGor}
		Let $$a(x)=\exp\left\{\sum\limits_{k=1}^\infty\dfrac{\widetilde{a}_k}{k}x^k\right\}=\sum\limits_{k=0}^\infty{a_k x^k},$$ and
		$$b(x)=\left(1-\dfrac{x}{\beta}\right)^{-c_1}=\sum\limits_{k=0}^\infty{b_k x^k},\  c_1\in(0,1)$$  be a two power series with radii of convergence at least $\alpha$ and exactly $\beta$, respectively. Assume that
		$\alpha>\beta>0$ and $r=\frac{\beta}{\alpha}\leq\frac{1}{\sqrt{2}}$, and for some positive costant $c_2$ the inequality $|\widetilde{a}_k|\leq{c_2}{\alpha^{-k}}$ holds.
		Then for all fixed $n\ge 0$ and $N>n$ for the coefficient 
		for $x^N$ in the product of $a(x)b(x)$ we have
		$$[x^N]a(x)b(x)=b_N\left(a(\beta)+\sum_{k=1}^n\dfrac{\binom{k-c_1}{k}}{\binom{N+c_1-1}{k}}\dfrac{\beta^k}{k!}a^{(k)}(\beta)+E\right),$$
		where $E\ll_{n,c_1,c_2}\left(\dfrac{r}{N}\right)^{n+1}.$
	\end{theorem}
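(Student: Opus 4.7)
\par The plan is to Taylor-expand $a$ around the singularity $x=\beta$ of $b$, then read off the $N$-th coefficient explicitly. Since $a$ is analytic in $|x|<\alpha$ and $\alpha>\beta$, one can write $a(x)=\sum_{k=0}^n\frac{a^{(k)}(\beta)}{k!}(x-\beta)^k+f_n(x)$, where the remainder $f_n$ vanishes to order $n+1$ at $\beta$ and remains analytic in $|x|<\alpha$. Multiplying by $b$ and using the cleaning identity $(x-\beta)^k(1-x/\beta)^{-c_1}=(-\beta)^k(1-x/\beta)^{k-c_1}$, the $N$-th coefficient of the $k$-th summand equals $(-\beta)^k\binom{k-c_1}{N}(-1/\beta)^N$. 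A direct computation using $\binom{k-c_1}{k}=\Gamma(k-c_1+1)/(k!\,\Gamma(1-c_1))$ and the reflection formula for $\Gamma$ yields
\[
\frac{\binom{k-c_1}{N}}{\binom{N+c_1-1}{N}}=\frac{(-1)^{k-N}\binom{k-c_1}{k}}{\binom{N+c_1-1}{k}},
\]
and after the $(-1)^N$ factors cancel, this reproduces precisely the main term $b_N\bigl(a(\beta)+\sum_{k=1}^n\frac{\binom{k-c_1}{k}}{\binom{N+c_1-1}{k}}\frac{\beta^k}{k!}a^{(k)}(\beta)\bigr)$ stated in the theorem.

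\par The remaining contribution $b_N\cdot E=[x^N]f_n(x)b(x)$ is handled by singularity analysis. Factor $f_n(x)=(x-\beta)^{n+1}g_n(x)$, where $g_n$ is analytic in $|x|<\alpha$; then $f_n(x)b(x)=(-\beta)^{n+1}g_n(x)(1-x/\beta)^{n+1-c_1}$. Since the exponent $n+1-c_1>0$, the singular factor is integrable at $x=\beta$, and a Hankel-contour estimate applied to the Cauchy integral $\frac{1}{2\pi i}\oint g_n(x)(1-x/\beta)^{n+1-c_1}x^{-N-1}\,dx$ gives a coefficient of order $\beta^{-N}N^{c_1-n-2}$ times $\|g_n\|_\infty$ on a suitable circle. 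To control $\|g_n\|_\infty$, use the hypothesis $|\widetilde a_k|\le c_2\alpha^{-k}$: from $\ln a(z)=\sum_{k\ge 1}\widetilde a_kz^k/k$ one obtains $|a(z)|\le(1-|z|/\alpha)^{-c_2}$ in the disk, which passes to a uniform bound on $g_n$ over any compact subdisk strictly inside $|z|<\alpha$. Dividing through by $|b_N|\asymp\beta^{-N}N^{c_1-1}$ then yields $E\ll_{n,c_1,c_2}N^{-(n+1)}$; the additional factor $r^{n+1}$ emerges from the Taylor remainder, whose growth on $|z|=R$ is of order $(R-\beta)^{n+1}$, compared to the rescaling $\beta^{n+1}$.

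\par The delicate step, and the one I expect to be the main obstacle, is extracting the sharp factor $r^{n+1}$ in the remainder. One must choose the Cauchy contour radius $R$ close enough to $\alpha$ that $(R-\beta)^{n+1}$ is almost $(\alpha-\beta)^{n+1}$, but not so close that the blow-up $(1-|z|/\alpha)^{-c_2}$ destroys the estimate; the optimal choice is $R=\alpha(1-c/N)$ for a suitable constant $c=c(c_2)$. The restriction $r\le 1/\sqrt2$ is used precisely to ensure that intermediate geometric series (arising either from expanding the Hankel integral or from bounding the convolution of the Taylor tail with $b$) converge with margin, so that the constants absorbed into $\ll_{n,c_1,c_2}$ remain finite and the final shape $(r/N)^{n+1}$ is attained.
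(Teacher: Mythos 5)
You should first note a structural point: the paper does not prove this statement at all --- it is quoted verbatim as Gorodetsky's theorem from \cite{Iud_bib_3} and used as a black box, so there is no in-paper proof to compare against. Judged on its own merits, your proposal has the right architecture and the decisive computation is correct: Taylor-expanding $a$ at $\beta$, using $(x-\beta)^k(1-x/\beta)^{-c_1}=(-\beta)^k(1-x/\beta)^{k-c_1}$, and the identity $\binom{k-c_1}{N}/\binom{-c_1}{N}=(-1)^k\binom{k-c_1}{k}/\binom{N+c_1-1}{k}$ does reproduce the stated main term exactly (I checked the sign bookkeeping; it works out). The error analysis is also completable along the lines you sketch: the bound $|a(z)|\le(1-|z|/\alpha)^{-c_2}$ follows from $|\widetilde a_k|\le c_2\alpha^{-k}$, and a Hankel/transfer estimate for $g_n(x)(1-x/\beta)^{n+1-c_1}$ gives $[x^N]f_nb\ll \beta^{n+1}\|g_n\|\,\beta^{-N}N^{c_1-n-2}$, hence $E\ll N^{-(n+1)}$ after dividing by $b_N\asymp\beta^{-N}N^{c_1-1}$.

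Two refinements you should make. First, to get the factor $r^{n+1}$ you must bound $g_n$ near $\beta$ by the \emph{Cauchy-integral form} of the Taylor remainder, $g_n(x)=\frac{1}{2\pi i}\oint_{|y|=R}\frac{a(y)\,dy}{(y-\beta)^{n+1}(y-x)}$, which yields $\beta^{n+1}\|g_n\|\ll \left(\frac{\beta}{R-\beta}\right)^{n+1}\max_{|y|=R}|a|$; a vague appeal to ``a uniform bound on compact subdisks'' loses exactly the $(R-\beta)^{-(n+1)}$ scaling you need. Second, your ``delicate step'' is not actually delicate: since the implied constant may depend on $n,c_1,c_2$, you do not need $R=\alpha(1-c/N)$; a fixed radius $R=\alpha(1-\varepsilon)$ with, say, $\varepsilon=\frac12(1-1/\sqrt2)$ gives $\max_{|y|=R}|a|\le\varepsilon^{-c_2}$ and $\beta/(R-\beta)\le r\cdot\frac{2}{1-1/\sqrt2}$, which is $\ll_n r$ as required. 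In this approach the hypothesis $r\le 1/\sqrt2$ is used only to keep $1-r$ bounded away from $0$ (any $r\le 1-\delta$ would do); in Gorodetsky's original argument, which proceeds by splitting the convolution $\sum_i a_ib_{N-i}$ at $i=N/2$, the threshold $r^2\le 1/2$ plays a genuinely quantitative role in killing the tail $i>N/2$. So your route is a legitimate, arguably cleaner, alternative proof of the cited result, once the two points above are tightened.
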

	We apply this theorem to a function $g(F)$, for which inequality \eqref{conda} holds.  Put $x=zq^{-1}=q^{-s}, n=1,$ in the formula \eqref{xi},  then
	\begin{multline}\sum_{N=0}^\infty{T(N)x^N}=\left(1-\dfrac{x}{q^{-1}}\right)^{-d_1}\exp\left\{\sum_{l=1}^\infty\pi_q(l)\ln f_2(x^l) \right\}=\\
	\left(1-\dfrac{x}{q^{-1}}\right)^{-d_1}\exp\left\{\sum_{k=2}^\infty A_k q^k x^k \right\}=\left(1-\dfrac{x}{q^{-1}}\right)^{-d_1}a(x).
	\end{multline}
	Set $ \beta=q^{-1}, c_1=d_1.$
	Let $R$ be radius of convergence of the power series $\sum\limits_{k=2}^\infty A_k q^k x^k$. Then by virtue of the estimate \eqref{Ak} we get
	$$R^{-1}=\limsup\limits_{k\rightarrow\infty}\sqrt[k]{k|A_k|q^k}\le\sqrt{q} $$
	and hence $R\geq\dfrac{1}{\sqrt{q}}$. Denote $\alpha=\dfrac{1}{\sqrt{q}}$. Then $\alpha>\beta>0$ and 
	$$r=\dfrac{\beta}{\alpha}=\dfrac{1}{\sqrt{q}}\leq\dfrac{1}{\sqrt{2}}.$$
Further, let's $\widetilde{a}_k=k A_k q^k.$ Then using the estimate \eqref{Ak}, we have
	$$|\widetilde{a}_k|\le c_2 (\sqrt{q})^k,\ \ c_2=4b_1(1)=8.$$
	Thus, all the conditions of the theorem \ref{thGor} are satisfied. So, we prove the following 
	\begin{corollary} Let $g$ be a real-valued multiplicative function, such that for any irreducible polynomial $P$ and every integer $k\ge 1$ the equality $d_k=g(P^k)$ holds for some arbitrary sequence of reals $\left\{d_k\right\}_{k=1}^\infty$. Set $$f(x)=1+d_1 x+d_2 x^2+\ldots.$$ Further, let $0<d_1<1$ and for $k\ge 1$ the inequality $k|a_k|\le 1$ holds, where values $a_k$ was defined in \eqref{akk}. 
		Then for $N\ge n+1$ we have
	\begin{equation}
	T(N)=(-1)^N\binom{-d_1}{N}q^N\left(a(q^{-1})+\sum_{k=1}^n\dfrac{\binom{k-d_1}{k}}{\binom{N+d_1-1}{k}}\dfrac{q^{-k}}{k!}a^{(k)}(q^{-1})\right)+R_n,
	\label{Gorod}
	\end{equation}
	where $R_n\ll_n\dfrac{q^N}{N^{1-d_1}}\left(\dfrac{1}{\sqrt{q}N}\right)^{n+1},$ and $a(x)$ determine by infinity product $$a(x)=\prod\limits_{l=1}^{+\infty}\left\{f(x^l)(1-x^l)^{d_1}\right\}^{\pi_q(l)},$$
	which converges in the circle centred at origin with radius $r=\dfrac{1}{\sqrt{q}}$.
	\end{corollary}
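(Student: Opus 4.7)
The plan is to obtain the corollary as a direct specialisation of Gorodetsky's Theorem \ref{thGor} applied to the generating series for $T(N)$. Substituting $x = zq^{-1} = q^{-s}$ with $n = 1$ in \eqref{xi} already puts
$$\sum_{N=0}^{\infty} T(N)\, x^N = \left(1 - \frac{x}{q^{-1}}\right)^{-d_1} a(x), \qquad a(x) = \exp\Big\{\sum_{k=2}^{\infty} A_k q^k x^k\Big\},$$
in the product shape $b(x) a(x)$ demanded by the theorem, with $\beta = q^{-1}$, $c_1 = d_1$, $\widetilde{a}_k = k A_k q^k$ for $k \ge 2$, and $\widetilde{a}_1 = 0$. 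My first step is just to record this identification and to compute $b_N$ from a direct binomial expansion of $(1-qx)^{-d_1}$, obtaining $b_N = (-1)^N \binom{-d_1}{N} q^N$.

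The substantive step is the verification of the analytic hypotheses on $a(x)$. I would recycle the bound on $|A_k|$ derived inside the proof of Theorem \ref{thm2}: specialising to $n = 1$ gives $b_1(1) = 2$ and hence $|A_k| \le 8/(k\, q^{k/2})$, so $|\widetilde{a}_k| \le 8\, (\sqrt{q})^{k}$. This shows that $a(x)$ has radius of convergence at least $\alpha = 1/\sqrt{q}$, that $|\widetilde{a}_k| \le c_2\, \alpha^{-k}$ with $c_2 = 8$, that $\alpha > \beta > 0$, and that $r = \beta/\alpha = 1/\sqrt{q} \le 1/\sqrt{2}$ for $q \ge 2$. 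Consequently every hypothesis of Theorem \ref{thGor} is in force.

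The remainder of the proof is mechanical. Applying Theorem \ref{thGor} to the coefficient $T(N) = [x^N] a(x) b(x)$ yields
$$T(N) = b_N \left( a(\beta) + \sum_{k=1}^{n} \frac{\binom{k-c_1}{k}}{\binom{N+c_1-1}{k}} \frac{\beta^k}{k!} a^{(k)}(\beta) + E \right),$$
and substituting $\beta = q^{-1}$, $c_1 = d_1$ together with the formula for $b_N$ produces exactly the main term in \eqref{Gorod}. The error $E \ll_n (r/N)^{n+1} = (1/(\sqrt{q}\, N))^{n+1}$ is multiplied by $b_N$; using the classical asymptotic $\binom{-d_1}{N} \asymp N^{d_1-1}/\Gamma(d_1)$ yields $|b_N| \ll q^N/N^{1-d_1}$, which gives the claimed remainder $R_n \ll_n \frac{q^N}{N^{1-d_1}}(\frac{1}{\sqrt{q}\, N})^{n+1}$. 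The only mildly delicate point will be to verify that the constants in the bound on $|A_k|$ (hence on $\widetilde{a}_k$) are independent of $q$ and $N$, so that the dependence in $\ll_n$ is genuinely on $n$ alone via the constants $c_1 = d_1$ and $c_2 = 8$ of Theorem \ref{thGor}; this is immediate from the explicit formula for $A_k$ but should be recorded.
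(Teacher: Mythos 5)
Your proposal is correct and follows essentially the same route as the paper: substitute $x=q^{-s}$ with $n=1$ in \eqref{xi}, identify $\beta=q^{-1}$, $c_1=d_1$, $\widetilde a_k=kA_kq^k$, recycle the bound $|A_k|\le 8k^{-1}q^{-k/2}$ from the proof of Theorem \ref{thm2} to verify $\alpha=q^{-1/2}$, $c_2=8$, $r=q^{-1/2}\le 2^{-1/2}$, and then apply Theorem \ref{thGor}. Your additional explicit computation of $b_N$ and the bound $|b_N|\ll q^N/N^{1-d_1}$ only makes explicit what the paper leaves implicit.
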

	\par Let $R'_n$ be the respectively remainder term that obtained from Theorem \ref{thm2}, so $R'_n\ll_n\dfrac{q^N}{N^{1-d_1}}\dfrac{1}{q^{n+1}}$. Note that
	if conditions of theorem \ref{thm2} are holds then $R'_n=o(R_n)$ if $q\to\infty$ and $N\ge 190$ is fixed. Further, in case of $q\to\infty,\ N\to\infty$ we have
	$$R'_n\ll_n R_n\ \ \text{for}\  N^2\ll q,\ \ R_n\ll_n R'_n\  \text{for} \ \ q\ll N^2.$$
	Note that
	$$a(q^{-1})=\prod_{l=1}^{+\infty}\left(f_2(q^{-l})\right)^{\pi_q(l)}=1+\sum_{k=2}^{+\infty}B_k=1+O\left(\dfrac{1}{q}\right).$$
	From here we get that the formula \eqref{Gorod} gives an asymptotic of $T(N)$ in case of $q^N\to \infty$ in contrast to the Theorem \ref{thm2}, which requires that $q\to\infty.$ In particular, the formula \eqref{Gorod} gives asymptotic of $T(N)$ at the fixed $q$ and $N\to\infty.$
	\section*{Examples}
	More detailed computations show that
	\begin{itemize}
		\item $h_1=d_1$
		\item $h_2=d_2-\dfrac{d_1(d_1+1)}{2}$
		\item $h_3=d_3-d_1d_2+\dfrac{d_1(d_1^2-1)}{3}$
	\end{itemize}
	These values $h_k$ allow us to write out the first few values of $A_l(N)$.
\begin{itemize}
	\item $A_0(N)=\binom{d_1+N-1}{N};$ 
	\item $A_1(N)=\binom{d_1+N-3}{N-2}\left(d_2-\dfrac{d_1(d_1+1)}{2}\right);$
	\item $A_2(N)=\binom{d_1+N-4}{N-3}\left(d_3-d_1d_2+\dfrac{d_1(d_1^2-1)}{3}\right)+\binom{d_1+N-5}{N-4}\binom{1+d_2-\frac{d_1(d_1+1)}{2}}{2};$
\end{itemize}
these values $A_l(N)$ allow us to write out the first three terms in the asymptotics given by the theorems \ref{thm1} and \ref{thm2}. Namely, we have
\begin{multline*}
T(N)=\binom{d_1+N-1}{N}q^N\left(1+\dfrac{\binom{N}{2}}{\binom{d_1+N-1}{2}}\left(d_2-\dfrac{d_1(d_1+1)}{2}\right)q^{-1}+\right.\\
\left\{\left.\dfrac{\binom{N}{3}}{\binom{d_1+N-1}{3}}\left(d_3-d_1d_2+\dfrac{d_1(d_1^2-1)}{3}\right)+\dfrac{\binom{N}{4}}{\binom{d_1+N-1}{4}}\binom{1+d_2-\frac{d_1(d_1+1)}{2}}{2}\right\}q^{-2}\right)+R,
\end{multline*}
where $R\ll_N q^{N-3}$ (in theorem \ref{thm1}) and $|R|\le 9.3\cdot d_1\dfrac{q^{N-3}}{N^{1-d_1}}$ (in theorem \ref{thm2}).
		\par Let us establish sufficient conditions for the inequality \eqref{conda} of theorem \ref{thm2}.
	 Now we establish the recursion for the sequence $a_k$ defined in \eqref{akk}. Let $f$ be the function defined in \eqref{f}, so we have
	$$\dfrac{ d \ln f(t)}{dt}f(t)=\dfrac{d f(t)}{d t}.$$
Expanding both sides of this equality in power series of $t$ and comparing the coefficients of $t^n$, after simple computations we obtain
	$$a_{n+1}=\dfrac{1}{n+1}\left((n+1)d_{n+1}-\sum_{k=1}^n{ka_kd_{n+1-k}}\right),\ \ n\ge 0,$$
	in particular $a_1=d_1$.
 \begin{propA} Let $g$ be a multiplicative function such that for any irreducible polynomial $ P $ over $ \mathbb{F} _q $ and any $ k \ge 1 $ the following inequality $d_k=g(P^k)$ holds, where $\left\{d_k\right\}_{k=1}^{\infty}\subset \mathbb{R}$ is arbitrary fixed sequence. Assume that
 	\begin{itemize}
 		\item $d_1\in(0,1)$;
 		\item $d_{k+1}\le d_k $ for $k\ge 1$;
 		\item $ kd_k\le (k+1)d_{k+1}$  for $k\ge 1$;
 	\end{itemize}
 	then the sum $T(N)$ has the decompositions \eqref{TN}, and \eqref{Gorod}.
 \end{propA}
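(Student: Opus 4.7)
The hypotheses of Proposition~A coincide with those of Theorem~\ref{thm2} and of the Corollary above, except for the analytic condition \eqref{conda}, namely $k|a_k|\le 1$ for all $k\ge 1$. Once \eqref{conda} is established, the decompositions \eqref{TN} and \eqref{Gorod} follow at once. My plan is therefore to reduce the proposition entirely to showing that if $d_1\in(0,1)$, $d_{k+1}\le d_k$ and $kd_k\le (k+1)d_{k+1}$ for every $k\ge 1$, then $0\le ka_k\le 1$ for every $k\ge 1$.

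I would work with the generating series
\[\phi(t)\;:=\;\frac{tf'(t)}{f(t)}\;=\;\sum_{k\ge 1}(ka_k)\,t^k\]
and aim to show that both $\phi(t)$ and $(1-t)^{-1}-\phi(t)$ have non-negative Taylor coefficients, by writing each as a product of series with non-negative coefficients. The monotonicity $d_{k+1}\le d_k$ yields $(1-t)f(t)=1-u(t)$ with $u(t)=\sum_{k\ge 1}(d_{k-1}-d_k)t^k$ non-negative, so the geometric expansion $1/((1-t)f(t))=\sum_{m\ge 0}u(t)^m$ is non-negative as well; the monotonicity $kd_k\le(k+1)d_{k+1}$ gives
\[(1-t)\cdot tf'(t)\;=\;d_1t+\sum_{k\ge 2}\bigl(kd_k-(k-1)d_{k-1}\bigr)t^k,\]
again non-negative. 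Writing $\phi(t)=(1-t)tf'(t)\cdot[(1-t)f(t)]^{-1}$ as a product of these two non-negative series then yields $ka_k\ge 0$ for every $k$.

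For the upper bound I would consider
\[\frac{1}{1-t}-\phi(t)\;=\;\frac{f(t)-(1-t)\,tf'(t)}{(1-t)\,f(t)},\]
compute the numerator coefficientwise, and obtain
\[f(t)-(1-t)\,tf'(t)\;=\;1+\sum_{k\ge 2}(k-1)(d_{k-1}-d_k)\,t^k,\]
which is non-negative by the first monotonicity assumption. Combining with non-negativity of $1/((1-t)f(t))$ produces $1-ka_k\ge 0$, completing the verification of \eqref{conda}. The main obstacle is this upper bound: since $ka_k\le 1$ is genuinely sharp in the limit (the instance $d_k\equiv d_1$ gives $ka_k=1-(1-d_1)^k\to 1$), no crude inductive estimate on $|a_k|$ can succeed, and the proof hinges on spotting the identity above, whose non-negativity happens to rely on exactly the same monotonicity hypothesis as the denominator.
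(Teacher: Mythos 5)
Your argument is correct, and it takes a genuinely different route from the paper. The paper proves the slightly stronger statement $0<a_k<1/k$ by induction on the recursion $a_{n+1}=\frac{1}{n+1}\bigl((n+1)d_{n+1}-\sum_{k=1}^{n}k a_k d_{n+1-k}\bigr)$ (obtained from $f\cdot(\ln f)'=f'$), rewriting the sum via an Abel/telescoping rearrangement so that the lower bound follows from $kd_k\le(k+1)d_{k+1}$ and the upper bound from $d_{k+1}\le d_k$, the telescoped sum collapsing to $(n+1)(d_{n+1}-d_n)+1\le 1$. You instead prove coefficientwise non-negativity of the two series $\phi(t)=tf'(t)/f(t)$ and $(1-t)^{-1}-\phi(t)$ by exhibiting each as a product of series with non-negative coefficients; all four ingredients check out: $(1-t)f(t)=1-u(t)$ with $u\ge 0$ and $u(0)=0$ (using $d_0=1>d_1$ and $d_{k-1}\ge d_k$), hence $[(1-t)f(t)]^{-1}=\sum_m u^m\ge 0$; $(1-t)tf'(t)=d_1t+\sum_{k\ge2}\bigl(kd_k-(k-1)d_{k-1}\bigr)t^k\ge 0$ by the third hypothesis; and $f(t)-(1-t)tf'(t)=1+\sum_{k\ge2}(k-1)(d_{k-1}-d_k)t^k\ge 0$ by the second. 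This yields $0\le ka_k\le 1$, which is all that \eqref{conda} requires, and the reduction to Theorem~\ref{thm2} and the Corollary is exactly as in the paper. Your approach is structurally cleaner and makes transparent \emph{why} each monotonicity hypothesis enters (one controls the denominator and the upper-bound numerator, the other the lower-bound numerator); the paper's induction is more elementary and delivers strict inequalities. One quibble: your closing claim that ``no crude inductive estimate can succeed'' is overstated --- the paper's proof \emph{is} an inductive estimate, saved by exactly the same telescoping cancellation that your identity encodes; your example $d_k\equiv d_1$, giving $ka_k=1-(1-d_1)^k\to 1$, does correctly show the bound is asymptotically sharp, so any proof must exploit cancellation rather than absolute values alone.
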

 \begin{proof}
 	It is sufficient to show that the following inequality
 	\begin{equation}
 	0<a_k<\dfrac{1}{k}
 	\label{ak2}
 	\end{equation}
 	holds. For $k=1$ we have $a_1=d_1\in(0,1).$\par Assume that \eqref{ak2} holds for $k=1, \ldots, n,\ \ n\ge 1.$ Now we prove that this inequality holds for $k=n+1$. By virtue of the conditions of the proposition and the assumption of induction we have
 	\begin{multline}a_{n+1}=\dfrac{1}{n+1}\left((n+1)d_{n+1}-\sum_{k=1}^n ka_k d_{n+1-k}\right)=\\
 	\dfrac{1}{n+1}\left((n+1)d_{n+1}-nd_n+nd_n-\sum_{k=1}^{n-1}ka_k d_{n-k}+\sum_{k=1}^{n-1}ka_k (d_{n-k}-d_{n+1-k})-na_n d_1\right)=\\
 	\dfrac{1}{n+1}\left((n+1)d_{n+1}-nd_n+(1-d_1)na_n+\sum_{k=1}^{n-1}ka_k (d_{n-k}-d_{n+1-k})\right)>0.
 	\end{multline}
 	On the other hand, by virtue of the assumption of induction
 	\begin{multline}
 	a_{n+1}=\dfrac{1}{n+1}\left((n+1)d_{n+1}-nd_n+(1-d_1)na_n+\sum_{k=1}^{n-1}ka_k (d_{n-k}-d_{n+1-k})\right)<\\
 	\dfrac{1}{n+1}\left(n(d_{n+1}-d_n)+d_{n+1}+(1-d_1)+\sum_{k=1}^{n-1} (d_{n-k}-d_{n+1-k})\right)=\\
 	\dfrac{1}{n+1}\left((n+1)(d_{n+1}-d_n)+1\right)\le\dfrac{1}{n+1}.
 	\end{multline}
 	Thus the proposition is proved.
 \end{proof}
	\begin{propB} Let $g$ be a multiplicative function such that for any irreducible polynomial $ P $ over $ \mathbb{F} _q $ and any $ k \ge 1 $ the following inequality $d_k=g(P^k)$ holds, where $\left\{d_k\right\}_{k=1}^{\infty}\subset \mathbb{R}$ is an arbitrary fixed sequence. Then if $d_1\in(0,1)$ and for all $n\ge 1$ there is the inequality
		\begin{equation}
		\sum_{k=1}^n|d_k|+(n+1)|d_{n+1}|\le 1,
		\label{d1dn}
		\end{equation}
		then the sum $T(N)$ has the decompositions \eqref{TN}, and \eqref{Gorod}.
	\end{propB}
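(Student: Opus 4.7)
The plan is to reduce Proposition~B to Theorem~\ref{thm2} and the Corollary by verifying the single nontrivial hypothesis $k|a_k|\le 1$ for every $k\ge 1$; the other hypothesis $0<d_1<1$ is assumed directly. Once the bound on the $a_k$ is in place, both decompositions \eqref{TN} and \eqref{Gorod} are immediate applications of the corresponding results.

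The core step is an induction on $k$, driven by the recursion
$$a_{n+1}=\dfrac{1}{n+1}\left((n+1)d_{n+1}-\sum_{k=1}^{n}ka_k\,d_{n+1-k}\right),\qquad a_1=d_1,$$
established just before Proposition~A. The base case $1\cdot|a_1|=|d_1|<1$ is immediate. Assuming $k|a_k|\le 1$ for all $k\le n$, multiply the recursion by $n+1$ and apply the triangle inequality together with the inductive hypothesis:
$$(n+1)|a_{n+1}|\le(n+1)|d_{n+1}|+\sum_{k=1}^{n}(k|a_k|)\,|d_{n+1-k}|\le(n+1)|d_{n+1}|+\sum_{j=1}^{n}|d_j|,$$
where the last step uses the reindexing $j=n+1-k$. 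By the standing hypothesis \eqref{d1dn}, the right-hand side is at most $1$, so $(n+1)|a_{n+1}|\le 1$, which closes the induction.

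I do not expect any real obstacle here: the decisive observation is simply that the recursion packages each $a_k$ with the very factor $k$ that the inductive hypothesis controls, so those factors are absorbed cleanly and the estimate on $(n+1)|a_{n+1}|$ collapses to exactly the combination $\sum_{j=1}^{n}|d_j|+(n+1)|d_{n+1}|$ appearing in \eqref{d1dn}. Having established $k|a_k|\le 1$ for all $k\ge 1$, all hypotheses of Theorem~\ref{thm2} and of the Corollary are met, and the expansions \eqref{TN} and \eqref{Gorod} follow at once, completing the proof.
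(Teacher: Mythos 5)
Your proposal is correct and follows essentially the same route as the paper: an induction on $k$ using the recursion for $a_{n+1}$, the triangle inequality, and the hypothesis \eqref{d1dn} to obtain $(n+1)|a_{n+1}|\le 1$, after which Theorem~\ref{thm2} and the Corollary apply. The reindexing $j=n+1-k$ you make explicit is implicit in the paper's bound $\sum_{k=1}^n|d_{n+1-k}|$; there is no substantive difference.
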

	\begin{proof}
		\par It suffices to show that inequality \eqref{conda} follows from inequality \eqref{d1dn}. 
		\\For $n=1$ we have $a_1=d_1\in(0,1)$. Hence $|a_1|\le 1$. 
		\par Let us suppose that $|a_k|\le\dfrac{1}{k}$ for $1\le k\le n,\ \ n\ge1.$ Then
		\begin{multline}
		|a_{n+1}|\le\dfrac{1}{n+1}\left(|d_{n+1}|(n+1)+\sum_{k=1}^n|ka_k d_{n+1-k}|\right)\le\\
		\dfrac{1}{n+1}\left(|d_{n+1}|(n+1)+\sum_{k=1}^n|d_{n+1-k}|\right)\le\dfrac{1}{n+1}.
		\end{multline}
		Thus the inequality \eqref{conda} is proved. By induction the proposition is proved.
	\end{proof}
	
We give examples of functions $g$ that satisfy the conditions of proposition A:
	\begin{itemize}
		\item $g_1(F)=(\tau(F))^{-\alpha},\ 0<\alpha\le 1. $
		\item $g_2(F)=c^{\omega(F)},\ 0<c< 1 . $
		\item $g_3(F)=\dfrac{\tau_m(F)}{\tau_{m+1}(F)},\ m\ge 2. $
		\item $g_4(F)=\dfrac{1}{\tau(F^r)},\ r\ge 1$.
	\end{itemize}
	We also give examples of functions $g$ that satisfy the conditions of proposition B:
	\begin{itemize}
		\item $g_5(F)=(\tau(F))^{-\alpha},\ \alpha\ge 2. $
		\item $g_6(F)=\dfrac{1}{\tau_m(F)},\ m\ge 3. $
		\item $g_7(F)=\dfrac{1}{\left(\tau_{m_1}(F)\right)^{\gamma_1}\ldots\left(\tau_{m_k}(F)\right)^{\gamma_k}},\ \ k\ge2,\ \gamma_i\ge1$.
		\item $ g_8(F)=\dfrac{1}{\tau_m(F^r)},\ r\ge 1,\ m\ge 3$.
	\end{itemize}
	For the function $g_1$, the condition $k d_k\le(k+1)d_{k+1}$ is equivalent to the inequality $$\left(1+ \dfrac{1}{k+1}\right)^{\alpha}\le 1+\dfrac{1}{k}.$$
	The remaining conditions for $g_1$ are obviously hold.	For the functions $g_k,\ k\in\{2,3,4\}$, the conditions of proposition A are easily verified.
	\par For the function $g_5$ we have $d_k=g_5(P^k)= \dfrac{1}{(k+1)^\alpha}$. Hence for $\alpha\ge 2$ we have
	\begin{multline}
	\sum_{k=1}^n\dfrac{1}{(k+1)^\alpha}+\dfrac{n+1}{(n+2)^\alpha}\le\sum_{k=1}^n\dfrac{1}{(k+1)^2}+\dfrac{n+1}{(n+2)^2}=\\
	\dfrac{\pi^2}{6}-1-\sum_{k=2}^{+\infty}\dfrac{1}{(n+k)^2}+\dfrac{n+1}{(n+2)^2}\le \dfrac{\pi^2}{6}-1-\int_{n+2}^{+\infty}\dfrac{dt}{t^2}+\dfrac{n+1}{(n+2)^2}\le\dfrac{\pi^2}{6}-1<1.
	\end{multline}
	For the function $g_6$ we get $$d_k=g_6(P^k)=\dfrac{1}{\binom{m+k-1}{m-1}}\le\dfrac{2}{(k+1)(k+2)},\ m\ge 3.$$
	Hence
	$$\sum_{k=1}^n\dfrac{2}{(k+1)(k+2)}+\dfrac{2(n+1)}{(n+2)(n+3)}=1-\dfrac{4}{(n+2)(n+3)}<1.$$
	For the functions $g_7$ and $g_8$, proposition B is satisfied due to obvious inequalities
		$$g_7(P^k)\le g_5(P^k)\ \text{for $\alpha=2$},\ \ g_8(P^k)\le g_6(P^k).$$
	\par In what follows, $F$ runs through the set of monic polynomials. The following 2 examples refer to the theorem \ref{thm2}. For $q\to\infty$ and $N\ge710$ or for $q\to\infty$ and $N\to\infty$ we have
	\begin{multline*}
	\sum_{\deg F=N}\dfrac{1}{2^{\omega(F)}}=\dfrac{q^N}{4^N}\left(\binom{2N}{N}+2\binom{2N-4}{N-2}q^{-1}+\left(8\binom{2N-6}{N-3}+18\binom{2N-8}{N-4}\right)q^{-2}\right)+R_1;
	\end{multline*}
	$$
	\sum_{\deg F=N}\dfrac{1}{\tau(F)}=\dfrac{q^N}{4^N}\left(\binom{2N}{N}-\dfrac{2}{3}\binom{2N-4}{N-2}q^{-1}-\left(\dfrac{8}{3}\binom{2N-6}{N-3}+\dfrac{46}{9}\binom{2N-8}{N-4}\right)q^{-2}\right)+R_2;
	$$
	where $R_1,\,R_2\ll\frac{q^{N-3}}{\sqrt{N}}$ and implied costant in the symbol $\ll$ is (at most) 5.
	\par The following two examples refer to the corollary of the theorem \ref{thGor}.
	\\For $q^N\to\infty$ we have
	$$\sum_{\deg F=N}\dfrac{1}{2^{\omega(F)}}=C_0\cdot\dfrac{\binom{2N}{N}}{4^N}q^N+O\left(\dfrac{q^{N-0.5}}{N^{1.5}}\right),\ \ C_0=\prod_{l=1}^{+\infty}\left(\dfrac{2q^l-1}{2\sqrt{q^{2l}-q^{l}}}\right)^{\pi_q(l)};$$
	$$\sum_{\deg F=N}\dfrac{1}{\tau(F)}=C_1\cdot\dfrac{\binom{2N}{N}}{4^N}q^N+O\left(\dfrac{q^{N-0.5}}{N^{1.5}}\right),\ \ C_1=\prod_{l=1}^{+\infty}\left(\sqrt{q^{2l}-q^{l}}\ln\dfrac{q^l}{q^l-1}\right)^{\pi_q(l)}.$$
	The following 4 examples refer to the theorem \ref{thm1}.
	$$\sum_{\deg F=N}\tau_k(F^r)=\binom{\binom{k+r-1}{k-1}+N-1}{N}q^N+O_{N,k,r}\left(q^{N-1}\right),\ \ N\ge 1,\  q\to\infty;
$$
$$\sum_{\deg F=3}\tau_3(F^2)=56q^3-36q^2+8q;
$$
$$
\sum_{\deg F=3}\dfrac{1}{2^{\omega(F)}}=\dfrac{5q^3+q^2+2q}{16};
$$
$$
\sum_{\deg F=3}\dfrac{1}{\tau(F)}=\dfrac{15q^3-q^2-2q}{48};
$$
	


\begin{thebibliography}{10}
			
	\bibitem{Iud_bib_1} L.~Carlitz, The arithmetic of polymomials in a Galois field.  // \emph{Amer. J. Math.} \textbf{54(1)}, (1932), pp. 39-50.
	\bibitem{Iud_bib_2} S.~Ramanujan, Some formulae in the analytic theory of numbers.  // \emph{The Messenger of Math.} \textbf{45} (1916), pp. 81-84.
	\bibitem{Michael-Rosen-bib} M.~Rosen 2002, ``Number Theory in Function Fields'', New York: Springer.
	\bibitem{Iud_bib_3} O.~Gorodetsky 2016, ``A Polynomial Analogue of Landau's Theorem and Related Problems'', \texttt{arXiv:1603.02890v1 [math.NT]}.
	\bibitem{Bary-Soroker} L.~Bary-Soroker, Y.~Smilansky, A.~Wolf. 2016, ``On the function field analogue of Landau's theorem on sums of squares'', \texttt{arXiv:1504.06809v2 [math.NT]}.
	\bibitem{LidlNider-bib} R.~Lidl, H.~Niederreiter 1996, ``Finite fields'', Cambridge University Press. 
	\bibitem{Karatsuba_1983}
	
	Karatsuba~A.A. 1993, ``Basic Analytic Number Theory'', Springer-Verlag Berlin Heidelberg.
	
	
	
\end{thebibliography}
	\end{document}